\documentclass{amsart}
\usepackage{graphicx,tikz, caption}
\usepackage[utf8]{inputenc}
\usepackage{hyperref}
\usepackage[normalem]{ulem}
\usepackage{amsmath}
\usepackage{amssymb}
\usepackage{amsthm}
\usepackage{mathtools}
\usepackage{verbatim}
\usepackage{xcolor}
\usepackage{fullpage}
\usepackage[skip=3pt, indent=20pt]{parskip}
\usepackage{caption}
\usepackage{comment}
\usepackage{marginnote}
\usepackage{enumitem}
\usepackage{forest} 
\usepackage[
backend=biber,
style=alphabetic,
sorting=nty
]{biblatex}
\newcommand{\R}{\mathbb{R}}

\newcommand{\Z}{\mathbb{Z}}
\newtheorem{theorem}{Theorem}
\newtheorem*{thm}{Theorem}

\newtheorem{lemma}{Lemma}

\newtheorem{ex}{Example}
\newtheorem*{question}{Question}

\begin{document}

\title[Monotonicity formulas for harmonic functions on the infinite regular tree]{Monotonicity formulas for harmonic functions\\ on the infinite regular tree}

\author[KA]{Kathryn Atwood}
\address{Department of Mathematics, Western Washington University, Bellingham, WA 98225}
\email{atwoodk2@wwu.edu}

\author[MSVG]{Mariana Smit Vega Garcia}
\address{Department of Mathematics, Western Washington University, Bellingham, WA 98225}
\email{smitvem@wwu.edu}

\author[RW]{Richard Wang}
\address{Department of Mathematics, Western Washington University, Bellingham, WA 98225}
\email{wangr6@wwu.edu}

\date{}

\begin{abstract}
We continue the program initiated in \cite{SVGS}. In this paper, we focus on the infinite $d-$regular tree, and prove the monotonicity of a weighted Dirichlet energy, a Weiss-type monotonicity formula, and a generalization of the Almgren monotonicity formula of \cite{SVGS} for $p\ge 1$. We also compute examples in the infinite $2-$ and $3-$regular trees.
\end{abstract}

\maketitle

\section{Introduction}
We will work with the infinite $d-$regular tree $T_d = (V, E)$ where $d \geq 2$, which is the unique tree where each vertex has exactly $d$ neighbors. We fix a root $x_0$ and denote $V_k=\{x\in V \ : \ d(x_0,x)=k\}$. Recall that a function $u:V \rightarrow \mathbb{R}$ is harmonic if, for each vertex $x \in V$,
\[
\sum_{(x, y) \in E} (u(x) - u(y)) = 0 \qquad \mbox{or, equivalently} \qquad  u(x) = \frac{1}{d} \sum_{(x,y) \in E} u(y).
\]

The natural analog of a gradient $|\nabla u|$ along an edge $(a,b) \in E$ is simply $|u(a) - u(b)|$, and $p-$th powers $|\nabla u|^p$ should be understood in the same way. In the $d-$regular tree, any edge $(a,b) \in E$ has the property that the two endpoints have a different distance from $x$ (a vertex in $V$), thus, after possibly relabeling $a$ and $b$, we have $d(a,x) = d(b,x)+1$. Consequently, we can define a notion of distance between an edge $(a,b) = e \in E$ and a vertex $v \in V$ via
\[
d(v,e) = \min\left\{ d(x,a), d(x,b) \right\}.
\]

The interested reader is pointed to \cite{zbMATH00980775, zbMATH06946113, zbMATH03050722, zbMATH02133156} and references therein. 
Recently, discrete harmonic functions have been the focus of renewed interest. In \cite{zbMATH07542899}, the authors construct discrete harmonic functions in wedges. In \cite{zbMATH08096041}, unique continuation on planar graphs is addressed. An interesting three ball theorem for discrete harmonic functions is proved in 
\cite{zbMATH06375597}. The paper \cite{zbMATH07513362} proves that a discrete harmonic function that is bounded in large portions of $\Z^2$ must be constant. We also mention \cite{zbMATH07763395}, where the author proves uniqueness results for solutions of continuous and discrete PDEs, and \cite{zbMATH07160155}, where the authors discuss localization on the two dimensional lattice.
Recently, one of us and Steinerberger proved in \cite{SVGS} a version of the Almgren monotonicity formula for discrete harmonic functions. Continuing that program, in this paper, we prove extensions of three celebrated monotonicity formulas to the setting of discrete
harmonic functions on the $d-$regular tree.
Our first main result, Theorem \ref{T:Dirichlet} below, can be thought of as a generalization of the fact that for ``standard'' harmonic functions in $\R^n$, the function
\begin{equation}\label{doesntwork}
r\mapsto \frac{1}{r^n}\int_{B_r(x_0)}|\nabla u|^2
\end{equation}
is monotonic non-decreasing. The analogue of $\int_{B_r(x_0)}|\nabla u|^2$ for a harmonic function on the $d-$regular tree is
\[
\sum_{\substack{e=(y,z)\in E \\d(e,x_0)< k}}|u(y)-u(z)|^2=
\sum_{\ell=0}^{k-1}\sum_{\substack{(y,z)\in E\\ y\in V_{\ell}, \ z\in V_{\ell+1}}}|u(y)-u(z)|^2.
\]
In the context of discrete harmonic functions, however, the function 
\[
k\mapsto \sum_{\ell=0}^{k-1}\sum_{\substack{(y,z)\in E\\ y\in V_{\ell},\ z\in V_{\ell+1}}}|u(y)-u(z)|^2,
\]
need not diverge to infinity for a nonconstant $u$ (see \S \ref{S:Dirichlet}), which means weights need to be included for a version of \eqref{doesntwork} to hold. This is done in Theorem \ref{T:Dirichlet}.

Our second main results are Weiss type monotonicity formulas (Theorems \ref{T:Weiss}, \ref{T:Weiss2d}) for harmonic functions in the infinite $d-$regular tree. Both of these results were inspired by those originally obtained by Weiss in \cite{Wei98} and \cite{Wei99} in the context of the classical obstacle problem. These monotonicity formulas play a fundamental role in the study of numerous free boundary problems, including the thin obstacle problem (see, for example, \cite{zbMATH06194692, zbMATH06444642, zbMATH05592778, zbMATH07019848, zbMATH06706425, zbMATH06578536, zbMATH07144967}).

Finally, we return to the Almgren monotonicity formula proved in \cite{SVGS}. The Almgren monotonicity formula (see \cite{zbMATH03686256, zbMATH01528183}) is of fundamental importance in the study of harmonic functions. It is also a key ingredient in the study of unique continuation, and is used extensively in free boundary
problems (see, for example, \cite{zbMATH06062397}). In the context of the infinite $d-$regular tree, we generalize the Almgren type monotonicity formula of \cite{SVGS} for $p\ge 1$ in Theorem \ref{T:Almgren}.

In \S \ref{S:ex} we provide examples of harmonic functions in the infinite $2-$ and $3-$regular trees, and compute their weighted Dirichlet energy, Weiss, and Almgren frequency functions.

\section{Acknowledgments}
This material is based upon work supported by the National Science Foundation under Award No. DMS-2348739. M.S.V.G. was also partially supported by a Karen EDGE fellowship.

\section{Monotonicity of a weighted Dirichlet energy}\label{S:Dirichlet}

Our first main result, Theorem \ref{T:Dirichlet} below, can be thought of as a generalization of the fact that for ``standard'' harmonic functions in $\R^n$, the function
$r\mapsto \frac{1}{r^n}\int_{B_r(x_0)}|\nabla u|^2$ 
is monotonic non-decreasing. A first natural related question in the discrete setting is:

\begin{question}\label{Q:Dirichlet} Can one define a function $f(k)$ with $\lim\limits_{k\rightarrow\infty}f(k)\rightarrow \infty$ so that if \[
D_k=\sum_{\substack{(y,z)\in E\\ y\in V_k, \ z\in V_{k+1}}}|u(y)-u(z)|^2,
\]then 
\[
F(k)=\frac{1}{f(k)}\sum_{l=0}^{k-1}D_{\ell}
\]
is monotonic non-decreasing whenever $u$ is harmonic?
\end{question}

In $\R^n$, note that if $u$ is harmonic and not constant,  $\int_{B_r(x_0)}|\nabla u|^2 \rightarrow \infty$ as $r\rightarrow \infty$. The same is not always true for harmonic functions in graphs, as was observed in Example 6.2.2 of \cite{Davide} for a non-tree graph. We now give an example for the $3-$regular tree:

\begin{ex}\label{needweight}
    In the $3-$regular tree with root $x_0$, define $u(x_0)=0$. If the children of $x_0$ are $y_1,y_2,y_3,$ let $u(y_1)=1$, $u(y_2)=u(y_3)=\frac{-1}{2}$. Assume $u$ has been defined up to $V_k$, with $k\ge 1$. Let $v\in V_{k+1}$ and call $v$'s parent $v_p$, and $v$'s children $z_1$ and $z_2$. Define $u(z_1)=u(z_2)=\frac{3u(v)-u(v_p)}{2}$ (see Figure \ref{f:example}). 
Then $u$ is harmonic. If $\ell\ge 1$, $w\in V_{\ell}$  has children $w_1,w_2$ and parent $w_p$, then
\[
u(w)-u(w_i)=u(w)-\frac{3u(w)-u(w_p)}{2}=\frac{u(w_p)-u(w)}{2}.
\]Therefore
\[
D_{\ell}=\sum_{w\in V_{\ell}}2\frac{1}{4}(u(w)-u(w_p))^2=\frac{1}{2}D_{k}.
\]
Using $D_0=\frac{3}{2}$ we conclude hence $D_k=\frac{3}{2\cdot 2^{k}}$. Therefore
\[
\sum_{j=0}^{\infty}D_j=\sum_{j=0}^{\infty}\frac{3}{2\cdot2^{j}}=3.
\]

\begin{center}
\begin{figure}[htbp]
\begin{tikzpicture}[
    scale=1,
    vdot/.style={circle, fill=black, inner sep=1.1pt},
    vlabel/.style={font=\tiny, inner sep=2pt}, 
    edge/.style={draw=black!60, thin}
]

    \node[vdot] (x0) at (0,0) {};
    \node[vlabel, anchor=south east] at (x0) {$0$};

    \node[vdot] (c1) at (90:0.8) {};
    \node[vlabel, anchor=east] at (c1) {$1$};
    \draw[edge] (x0) -- (c1);
    
    \node[vdot] (c2) at (210:0.8) {};
    \node[vlabel, anchor=north east] at (c2) {$-\frac{1}{2}$};
    \draw[edge] (x0) -- (c2);
    
    \node[vdot] (c3) at (330:0.8) {};
    \node[vlabel, anchor=north west] at (c3) {$-\frac{1}{2}$};
    \draw[edge] (x0) -- (c3);

    \node[vdot] (c11) at (75:1.5) {};
    \node[vlabel, anchor=east] at (c11) {$\frac{3}{2}$};
    \node[vdot] (c12) at (105:1.5) {};
    \node[vlabel, anchor=west] at (c12) {$\frac{3}{2}$};
    \draw[edge] (c1) -- (c11) (c1) -- (c12);

    \node[vdot] (c21) at (195:1.5) {};
    \node[vlabel, anchor=south east] at (c21) {$-\frac{3}{4}$};
    \node[vdot] (c22) at (225:1.5) {};
    \node[vlabel, anchor=north] at (c22) {$-\frac{3}{4}$};
    \draw[edge] (c2) -- (c21) (c2) -- (c22);

    \node[vdot] (c31) at (315:1.5) {};
    \node[vlabel, anchor=north] at (c31) {$-\frac{3}{4}$};
    \node[vdot] (c32) at (345:1.5) {};
    \node[vlabel, anchor=south west] at (c32) {$-\frac{3}{4}$};
    \draw[edge] (c3) -- (c31) (c3) -- (c32);

    \node[vdot] (c111) at (68:2.1) {}; \node[vlabel, anchor=south] at (c111) {$\frac{7}{4}$};
    \node[vdot] (c112) at (82:2.1) {}; \node[vlabel, anchor=south] at (c112) {$\frac{7}{4}$};
    \draw[edge] (c11) -- (c111) (c11) -- (c112);

    \node[vdot] (c121) at (98:2.1) {}; \node[vlabel, anchor=south] at (c121) {$\frac{7}{4}$};
    \node[vdot] (c122) at (112:2.1) {}; \node[vlabel, anchor=south] at (c122) {$\frac{7}{4}$};
    \draw[edge] (c12) -- (c121) (c12) -- (c122);

    \node[vdot] (c211) at (188:2.1) {}; \node[vlabel, anchor=east] at (c211) {$-\frac{7}{8}$};
    \node[vdot] (c212) at (202:2.1) {}; \node[vlabel, anchor=east] at (c212) {$-\frac{7}{8}$};
    \draw[edge] (c21) -- (c211) (c21) -- (c212);

    \node[vdot] (c221) at (218:2.1) {}; \node[vlabel, anchor=north east] at (c221) {$-\frac{7}{8}$};
    \node[vdot] (c222) at (232:2.1) {}; \node[vlabel, anchor=north] at (c222) {$-\frac{7}{8}$};
    \draw[edge] (c22) -- (c221) (c22) -- (c222);

    \node[vdot] (c311) at (308:2.1) {}; \node[vlabel, anchor=north] at (c311) {$-\frac{7}{8}$};
    \node[vdot] (c312) at (322:2.1) {}; \node[vlabel, anchor=north west] at (c312) {$-\frac{7}{8}$};
    \draw[edge] (c31) -- (c311) (c31) -- (c312);

    \node[vdot] (c321) at (338:2.1) {}; \node[vlabel, anchor=west] at (c321) {$-\frac{7}{8}$};
    \node[vdot] (c322) at (352:2.1) {}; \node[vlabel, anchor=west] at (c322) {$-\frac{7}{8}$};
    \draw[edge] (c32) -- (c321) (c32) -- (c322);
\end{tikzpicture}
\caption{Harmonic function on the infinite 3-regular tree described in Example \ref{needweight} (locally).}
\label{f:example}
\end{figure}
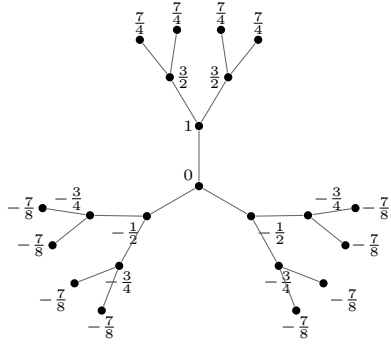
\end{center}
\end{ex}

This leads to the negative answer to Question \ref{Q:Dirichlet}, and indicates that weights might be needed to prove a discrete version of the monotonicity of $\frac{1}{r^n}\int_{B_r}|\nabla u|^2$.  In this paper, we prove the following monotonicity formula for a weighted Dirichlet energy:
\begin{theorem}[Discrete weighted Dirichlet energy]\label{T:Dirichlet}
    Let $x_0$ be a vertex in the infinite $d-$regular tree $T_d=(V,E)$ and let $u$ be a harmonic function on $T_d$. Then, for any $1 \leq p <\infty$
    \[
    G(k) = \frac{1}{k} \sum_{\substack{e \in E\\ d(x_0,e) < k}} ((d-1)^{p-1})^{d(x_0,e)} \cdot |\nabla u(e)|^p
    \]
    is monotonically non-decreasing in $k$.
\end{theorem}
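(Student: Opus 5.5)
The plan is to reduce the statement to a monotonicity property of the individual level sums and then prove that property one vertex at a time, using harmonicity and convexity. For $\ell\ge 0$ set
\[
a_\ell=(d-1)^{(p-1)\ell}\sum_{\substack{(y,z)\in E\\ y\in V_\ell,\ z\in V_{\ell+1}}}|u(y)-u(z)|^p .
\]
Every edge $e$ with $d(x_0,e)=\ell$ joins $V_\ell$ to $V_{\ell+1}$, so $G(k)=\frac1k\sum_{\ell=0}^{k-1}a_\ell$, which is the running average of the sequence $(a_\ell)$.

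First, the averaging step. The inequality $G(k+1)\ge G(k)$ is equivalent to
\[
k\sum_{\ell=0}^{k}a_\ell\ \ge\ (k+1)\sum_{\ell=0}^{k-1}a_\ell,
\qquad\text{that is,}\qquad
k\,a_k\ge \sum_{\ell=0}^{k-1}a_\ell .
\]
This holds as soon as $a_k\ge a_\ell$ for all $\ell<k$. So it suffices to prove that $a_{\ell+1}\ge a_\ell$ for every $\ell\ge0$.

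Second, the local inequality at a single vertex. Fix $w\in V_{\ell+1}$ with parent $w_p\in V_\ell$ and children $w_1,\dots,w_{d-1}\in V_{\ell+2}$; here $\ell+1\ge1$, so $w$ has exactly $d-1$ children. Harmonicity at $w$ gives $d\,u(w)=u(w_p)+\sum_i u(w_i)$, which rearranges to
\[
\sum_{i=1}^{d-1}\bigl(u(w_i)-u(w)\bigr)=u(w)-u(w_p).
\]
Since $t\mapsto|t|^p$ is convex for $p\ge1$, Jensen's inequality (equivalently, the power mean inequality) yields
\[
\sum_{i=1}^{d-1}|u(w_i)-u(w)|^p\ \ge\ (d-1)^{1-p}\Bigl|\sum_{i}(u(w_i)-u(w))\Bigr|^p=(d-1)^{1-p}|u(w)-u(w_p)|^p .
\]
Multiplying by $(d-1)^{(p-1)(\ell+1)}$ turns this into
\[
(d-1)^{(p-1)(\ell+1)}\sum_{i=1}^{d-1}|u(w_i)-u(w)|^p\ \ge\ (d-1)^{(p-1)\ell}\,|u(w)-u(w_p)|^p .
\]

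Third, summing over the level. Sum the last inequality over all $w\in V_{\ell+1}$. Each edge between $V_{\ell+1}$ and $V_{\ell+2}$ appears exactly once on the left, through its endpoint in $V_{\ell+1}$. Each edge between $V_\ell$ and $V_{\ell+1}$ appears exactly once on the right, through its endpoint $w\in V_{\ell+1}$. This gives $a_{\ell+1}\ge a_\ell$, and hence $G$ is non-decreasing.

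There is no serious obstacle. The one point needing care is the weight: the factor $(d-1)^{p-1}$ per level exactly cancels the loss $(d-1)^{1-p}$ from Jensen's inequality. For this bookkeeping to work, one must apply harmonicity at the vertex $w\in V_{\ell+1}$, which has one parent and $d-1$ children, and not at the root, which has $d$ children.
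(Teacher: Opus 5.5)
Your proposal is correct and matches the paper's proof step for step. You use the same weighted level sums ($a_\ell$ is the paper's $F(\ell)$), the same vertex-by-vertex inequality at $w\in V_{\ell+1}$ derived from harmonicity, and the same running-average argument; the one small difference is that you apply Jensen to the signed differences in one step, where the paper uses the triangle inequality followed by H\"older, which amounts to the same thing.
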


Example \ref{E:bounded}
shows that the exponential factor present is in general necessary.

 \begin{proof}
Let $k\ge 1$. We first explicitly write
\begin{align*}
  \sum_{\substack{e \in E\\d(x_0,e) < k}} |\nabla u(e)|^p= \sum_{\ell=0}^{k-1} \sum_{\substack{(a,b) \in E \\a\in V_{\ell}, \ b\in V_{\ell+1}}} |u(a) - u(b)|^p.
\end{align*}

\begin{center}
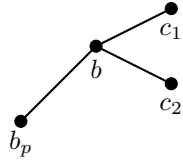
\begin{figure}[h!]
\begin{tikzpicture}
\filldraw (0,0) circle (0.08cm);
\filldraw (1,1) circle (0.08cm);
\filldraw (2,1.5) circle (0.08cm);
\filldraw (2,0.5) circle (0.08cm);
\draw [thick] (0,0) -- (1,1);
\draw [thick] (2,1.5) -- (1,1);
\draw [thick] (2,0.5) -- (1,1);
\node at (0, -0.3) {$b_p$};
\node at (1, 1-0.3) {$b$};
\node at (2, 1.5-0.3) {$c_{1}$};
\node at (2, 0.5-0.3) {$c_{2}$};
\end{tikzpicture}
\caption{A sketch for the argument when $d=3$.}
\end{figure}
\end{center}

We consider an individual branch in the tree: consider a vertex $b$, its ancestor (or ``parent'') $b_p$ and its descendants $c_1, \dots, c_{d-1}$. Then
\[ 
u(b_p) + \sum_{i=1}^{d-1} u(c_i) = d \cdot u(b)\]
which can be rewritten as
\[ u(b) - u(b_p) = \sum_{i=1}^{d-1} (u(c_i) - u(b))\]
and thus
\begin{equation}\label{ACF1} |u(b) - u(b_p)| \leq \sum_{i=1}^{d-1} |u(c_i) - u(b)|.
\end{equation}
Using H\"older's inequality, we get
\begin{equation}\label{ACF2}
\sum_{i=1}^{d-1} |u(c_i) - u(b)| \leq \left(  \sum_{i=1}^{d-1} |u(c_i) - u(b)|^p\right)^{1/p} (d-1)^{1- \frac{1}{p}}.
\end{equation}
Combining \eqref{ACF1} and \eqref{ACF2}, we deduce
\begin{equation}\label{ACF3}
    \sum_{i=1}^{d-1} |u(c_i) - u(b)|^p \geq \frac{|u(b) - u(b_p)|^{p}}{(d-1)^{p-1}}.
\end{equation} 

Now fix $\ell\ge 0$. Summing \eqref{ACF3} over $b\in V_{\ell+1}$ we obtain
\begin{align*}
\sum_{\substack{(z,w)\in E\\z\in V_{\ell+1}, \ w\in V_{\ell+2}}}|u(w)-u(z)|^p&=\sum_{b\in V_{\ell+1}}\sum_{i=1}^{d-1} |u(c_i) - u(b)|^p\\
&\geq \sum_{b\in V_{\ell}+1}\frac{|u(b) - u(b_p)|^{p}}{(d-1)^{p-1}}=\sum_{\substack{(c,d)\in E\\c\in V_{\ell}, \ d\in V_{\ell+1}}}\frac{|u(c) - u(d)|^{p}}{(d-1)^{p-1}}.
\end{align*}
This, in turn, implies that the function
\[
F(\ell)=(d-1)^{(p-1)\ell} \sum_{\substack{(x,y) \in E\\x\in V_{\ell}, \ y\in V_{\ell+1}}} |u(x) - u(y)|^p\]
is monotonically non-decreasing in $\ell$, as 
\begin{align*}
F(\ell+1)&=(d-1)^{(p-1)(\ell+1)}\sum_{\substack{(z,w) \in E\\ z\in V_{\ell+1}, \ w\in V_{\ell+2}}} |u(z) - u(w)|^p\\
&\ge (d-1)^{(p-1)(\ell+1)} \sum_{\substack{(c,d) \in E\\ c\in V_{\ell}, \ d\in V_{\ell+1}}} \frac{|u(c) - u(d)|^p}{(d-1)^{p-1}}=F(\ell).
\end{align*}
Now, recall
\begin{align*}
G(k)&=\frac{1}{k}\sum_{\substack{e \in E\\d(x_0,e) < k}} ((d-1)^{p-1})^{d(e,x_0)}|\nabla u(e)|^p= \frac{1}{k}\sum_{\ell=0}^{k-1} \sum_{\substack{(a,b) \in E\\a\in V_{\ell}, \ b\in V_{\ell+1}}} (d-1)^{(p-1)\ell}|u(a) - u(b)|^p\\
&=\frac{1}{k}\sum_{\ell=0}^{k-1}(d-1)^{(p-1)\ell}\sum_{\substack{(a,b) \in E\\a\in V_{\ell}, \ b\in V_{\ell+1}}}|u(a) - u(b)|^p=\frac{1}{k}\sum_{\ell=0}^{k-1}F(\ell).
\end{align*}
Therefore, it suffices to prove that for all $k\ge 1$, 
\[
\frac{1}{k}\sum_{\ell=0}^{k-1}F(\ell) \le \frac{1}{k+1}\sum_{\ell=0}^{k}F(\ell).
\]
This, in turn, is equivalent to 
\[
\left(1+\frac{1}{k}\right)\sum_{\ell=0}^{k-1}F(\ell)=\frac{k+1}{k}\sum_{\ell=0}^{k-1}F(\ell) \le\sum_{\ell=0}^{k}F(\ell).
\]
Consequently, it suffices to prove
\[
\frac{1}{k}\sum_{\ell=0}^{k-1}F(\ell)\le F(k).\] 
Since $F$ is monotonic non-decreasing,  $F(0),F(1),\ldots,F(k-1)\le F(k)$, hence the result holds.

 \end{proof}

\section{Weiss monotonicity formula}\label{S:Weiss}

For harmonic functions in $\R^n$, the functional
\[
W(r)=\frac{1}{r^{n+2}}\int_{B_r}|\nabla u|^2-\frac{2}{r^{n+3}}\int_{S_r}u^2
\]
is monotonic. Weiss proved in \cite{Wei98} and \cite{Wei99}  analogous results in the context of the classical obstacle problem. 
The main results of this section, Theorems \ref{T:Weiss} and \ref{T:Weiss2d} are inspired by this result:

\subsection{Weiss monotonicity formula for the infinite $d-$regular tree with $d\ge 3$}

\begin{theorem}\label{T:Weiss}For any harmonic function $u$ in the infinite $d-$regular tree with root $x_0$, the quantity
\[
W(k) = \sum_{\substack{e\in E\\ d(x_0,e)<k}}(d-1)^{d(x_0,e)}|\nabla u(e)|^2- \frac{d-2}{2}\frac{1}{(d-1)^{k-1}}\sum_{y\in V_k}u(y)^2
\]
is monotone non-decreasing for $k\ge 1$.
\end{theorem}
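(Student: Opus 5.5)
The plan is to compare $W(k+1)$ and $W(k)$ directly, writing every quantity in terms of the layer energies
\[
D_\ell=\sum_{\substack{(y,z)\in E\\ y\in V_\ell,\ z\in V_{\ell+1}}}|u(y)-u(z)|^2,\qquad S_k=\sum_{y\in V_k}u(y)^2,\qquad E_k=\sum_{\ell=0}^{k-1}D_\ell .
\]
Since the edges with $d(x_0,e)=k$ are exactly those between $V_k$ and $V_{k+1}$, we have
\[
W(k+1)-W(k)=(d-1)^kD_k-\frac{d-2}{2}\left(\frac{S_{k+1}}{(d-1)^k}-\frac{S_k}{(d-1)^{k-1}}\right)
=(d-1)^kD_k-\frac{d-2}{2(d-1)^k}\bigl(S_{k+1}-(d-1)S_k\bigr).
\]
So the task reduces to bounding $S_{k+1}-(d-1)S_k$ from above in terms of the $D_\ell$.

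\textbf{Step 1 (local expansion).} Fix $k\ge1$ and $b\in V_k$, with parent $b_p$ and children $c_1,\dots,c_{d-1}$. Write $a=u(b)$ and $\delta_i=u(c_i)-a$. As in the proof of Theorem \ref{T:Dirichlet}, harmonicity gives $\sum_i\delta_i=u(b)-u(b_p)$. Hence
\[
\sum_i u(c_i)^2=(d-1)a^2+2a\,(u(b)-u(b_p))+\sum_i\delta_i^2 .
\]
Summing over $b\in V_k$ yields
\[
S_{k+1}-(d-1)S_k=D_k+2\sum_{b\in V_k}u(b)\bigl(u(b)-u(b_p)\bigr).
\]

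\textbf{Step 2 (discrete Green identity).} I claim the cross term equals $E_k$. To see this, sum $u(x)\sum_{y\sim x}(u(x)-u(y))=0$ over all $x$ with $d(x_0,x)<k$ and regroup edge by edge. Each interior edge contributes $|\nabla u|^2$, and only the boundary edges from $V_{k-1}$ to $V_k$ are seen from one side. This gives
\[
\sum_{b\in V_k}u(b)\bigl(u(b)-u(b_p)\bigr)=\sum_{d(x_0,e)<k}|\nabla u(e)|^2=E_k .
\]
Getting the signs and boundary terms right here is the step I expect needs the most care. It is also what makes the otherwise signless cross term harmless.

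\textbf{Step 3 (apply Theorem \ref{T:Dirichlet}).} After Steps 1 and 2 we need
\[
(d-1)^kD_k\ \ge\ \frac{d-2}{2(d-1)^k}\,(D_k+2E_k).
\]
The proof of Theorem \ref{T:Dirichlet} with $p=2$ shows that $(d-1)^\ell D_\ell$ is non-decreasing. Hence $D_\ell\le (d-1)^{k-\ell}D_k$, and summing the geometric series,
\[
E_k\le D_k\,\frac{(d-1)^{k+1}-(d-1)}{d-2}.
\]
Substituting, the right-hand side is at most
\[
\frac{D_k}{2(d-1)^k}\bigl(2(d-1)^{k+1}-d\bigr).
\]
It therefore suffices that $2(d-1)^{k+1}-d\le 2(d-1)^{2k}$. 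This holds for $k\ge1$ and $d\ge3$ because $2k\ge k+1$. Note that $d\ge3$ is also used when dividing by $d-2$; for $d=2$ the Weiss term vanishes anyway. This shows $W(k+1)\ge W(k)$ for all $k\ge1$.
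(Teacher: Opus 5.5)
Your proof is correct and follows the paper's strategy. Both reduce $W(k+1)\ge W(k)$ to an upper bound on $N_k=S_{k+1}-(d-1)S_k$, express $N_k$ through the layer energies, and control those by the monotonicity of $(d-1)^\ell D_\ell$ summed as a geometric series. The paper reaches the same expression by telescoping the recursion $N_k=D_k+D_{k-1}+N_{k-1}$ from $N_1=D_1+2D_0$, whereas you get the closed form $N_k=D_k+2E_k$ directly by summation by parts. That identity does hold: the Green sum yields the one-sided boundary term $\sum_{b\in V_k}u(b_p)\bigl(u(b_p)-u(b)\bigr)=-E_{k-1}$, and rewriting it as $(u(b)-u(b_p))^2-u(b)\bigl(u(b)-u(b_p)\bigr)$ gives your Step 2.
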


\begin{proof}
We first rewrite $W(k)$. Notice that 
\[
\sum_{\substack{e\in E\\ d(x_0,e)<k}}(d-1)^{d(x_0,e)}|\nabla u(e)|^2=\sum_{j=0}^{k-1} (d-1)^j D_j,
\]
where if $z_{\ell}$ denotes the children of $y\in V_j$, and $y_{\ell}$ denotes the children of the root $x_0$, then for $j\ge 1$
\[
D_j=\sum_{y\in V_j}\sum_{\ell=1}^{d-1}|u(y)-u(z_{\ell})|^2, \qquad D_0=\sum_{\ell=1}^d|u(x_0)-u(y_{\ell})|^2.
\]
Call $H_k=\sum_{y\in V_k}u(y)^2$, so that
\[
W(k)=\sum_{j=0}^{k-1} (d-1)^j D_j-\frac{d-2}{2}\frac{H_k}{(d-1)^{k-1}}.
\]
We have
\begin{align*}
W(k+1)-W(k)&=(d-1)^kD_k+\left(\frac{d-2}{2}\right)\frac{(d-1)H_k-H_{k+1}}{(d-1)^{k}}\\
&=(d-1)^kD_k-\left(\frac{d-2}{2}\right)\frac{N_k}{(d-1)^{k}},
\end{align*}
where 
\begin{equation}\label{X}
N_k := H_{k+1}-(d-1)H_k.
\end{equation}
Thus
\begin{align*}
W(k+1)&\ge W(k) \iff (d-1)^kD_k\ge \left(\frac{d-2}{2}\right)\frac{N_k}{(d-1)^{k}} \\
&\iff (d-1)^{2k}D_k\ge \left(\frac{d-2}{2}\right)N_k.
\end{align*}

Notice that in addition to a monotonicity result, the main theorem of \cite{SVGS} also proves that $N_k\ge 0$. The key ingredient in our proof will be the proof that $N_k\le C_d(d-1)^kD_k$ for a specific constant $C_d.$ This will lead to $(d-1)^{2k}D_k\ge\frac{d-2}{2}N_k$ by the choice of $C_d$.
From harmonicity, if $v_p$ denotes the parent of $v$ and the $d-1$ children of $v$ are denoted by $c_1^v,\ldots,c_{d-1}^v$, 
\[
du(v)=u(v_p)+\sum_{j=1}^{d-1}u(c_j^v).
\]
Therefore,
\begin{equation}\label{c1c2v}
\sum_{j=1}^{d-1}u(c_j^v)-(d-1)u(v)=u(v)-u(v_p).
\end{equation}

We notice that for any constants $A_1,\ldots,A_{d-1}$,
\begin{equation}\label{ineq}
\sum_{j=1}^{d-1}A_j^2=\frac{1}{d-1}\left(\sum_{j=1}^{d-1}A_j\right)^2+\frac{1}{d-1}\sum_{1\le i<j\le d-1}(A_i-A_j)^2.\end{equation}
Let $v\in V_k$. Applying \eqref{ineq} with $A_j=u(v)-u(c_j^v)$ and using \eqref{c1c2v}, we obtain 
\begin{equation}\label{sumofdiff}
\sum_{j=1}^{d-1}(u(v)-u(c_j^v))^2
= \frac{1}{d-1}\left(u(v)-u(v_p)\right)^2+\frac{1}{d-1}\sum_{1\le i<j\le d-1}(u(c_i^v)-u(c_j^v))^2.
\end{equation}

Let 
\[
R_k:=\sum_{\substack{v\in V_k\\1\le i<j\le d-1}}(u(c_i^v)-u(c_j^v))^2\geq 0.\] Summing \eqref{sumofdiff} over $v\in V_k$ gives
\begin{equation}\label{Ak}
D_k=\frac{1}{d-1}D_{k-1}+\frac{1}{d-1}R_k.
\end{equation}
We can rewrite \eqref{Ak} as 
\begin{equation}\label{Ak2}
(d-1)^kD_k-(d-1)^{k-1}D_{k-1}=(d-1)^{k-1}R_k\ge0,
\end{equation}
which implies $(d-1)^kD_k$ is non-decreasing.

Expanding the square,
\[
D_k=\sum_{v\in V_k}\sum_{i=1}^{d-1}(u(v)-u(c_i^v))^2=(d-1)\sum_{v\in V_k}u(v)^2-2\sum_{v\in V_k}\left(u(v)\left(\sum_{j=1}^{d-1}u(c_j^v)\right)\right)+\sum_{\substack{v\in V_k\\1\le i\le d-1}}u(c_i^v)^2.
\]

By harmonicity, $\sum_{j=1}^{d-1}u(c_j^v)=du(v)-u(v_p)$ hence
\begin{equation}\label{Dnew}
D_k=(d-1)\sum_{v\in V_k}u(v)^2-2\sum_{v\in V_k}u(v)(du(v)-u(v_p))+\sum_{\substack{v\in V_k\\1\le i\le d-1}}u(c_i^v)^2.
\end{equation}
That is, if we define for $k\ge 1$ 
\[
C_k=\sum_{v\in V_k}u(v)u(v_p)=\sum_{z\in V_{k-1}}\sum_{j=1}^{d-1}u(z)u(c_j^z),
\]
then \eqref{Dnew} becomes
\begin{equation}\label{newDk}
D_k=\sum_{\substack{v\in V_k\\1\le i\le d-1}}u(c_i^v)^2-(d+1)\sum_{v\in V_k}u(v)^2+2\sum_{v\in V_k}u(v)u(v_p)=H_{k+1}-(d+1)H_k+2C_k.
\end{equation}
By definition, $C_{k-1}=\sum_{w\in V_{k-1}}u(w)u(w_p)$. By harmonicity, $u(w_p)=du(w)-\sum_{j=1}^{d-1}u(c_j^w)$, hence 
\[
C_{k-1}=\sum_{w\in V_{k-1}}\left(-\sum_{j=1}^{d-1}u(w)u(c_j^w)+du(w)^2\right)=dH_{k-1}-C_k.\]

Therefore
\begin{equation}\label{Ck}
C_{k}=dH_{k-1}-C_{k-1}.
\end{equation}
Combining \eqref{newDk} and \eqref{Ck} we obtain
\[
D_k+D_{k-1}
=H_{k+1}-dH_k+(d-1)H_{k-1}.
\]

Using the definition of $N_k$ from \eqref{X}, we then have
\begin{equation}\label{XkA}
N_k=H_{k+1}-(d-1)H_k = (H_{k+1}-dH_k+(d-1)H_{k-1})
+(H_k-(d-1)H_{k-1})= D_k+D_{k-1}+N_{k-1}.
\end{equation}

From \eqref{Ak2}, we have for any $0\le j\le k$,
\begin{equation}\label{doubling2}
D_j\le \frac{(d-1)^kD_k}{(d-1)^j}.
\end{equation}
In particular:
\begin{equation}\label{Ak-1Ak}
    D_{k-1}\le (d-1)D_k.
\end{equation}

We claim that $N_1=D_1+2D_0.$
Indeed, since $C_1=dH_0$, \eqref{newDk} gives
\[
D_1=H_2-(d+1)H_1+2dH_0.
\]

Then
\[
N_1=H_2-(d-1)H_1
= D_1+2H_1-2dH_0.
\]

But we have, simply writing
\[
D_0=\sum_{j=1}^{d}(u(x_0)-u(y_j))^2=du(x_0)^2-2u(x_0)\sum_{j=1}^du(y_j)+\sum_{j=1}^du(y_j)^2=H_1-dH_0,
\]
so $2D_0=2H_1-2dH_0.$

Thus:
\begin{equation}\label{X1}
N_1=D_1+2D_0.
\end{equation}
Let $C_d=\frac{2d-1}{d-1}+\frac{d}{(d-1)(d-2)}=2+\frac{2}{d-2}$. We prove that for $k\ge 1$
\begin{equation}\label{XkAk}
N_k\le C_d(d-1)^kD_k.
\end{equation}
First, let us deal with the case $k=1$. From \eqref{X1} and \eqref{Ak-1Ak},
\[
N_1 = D_1 + 2D_0 \le D_1 + 2(d-1)D_1 = (2d-1)D_1 \le C_d(d-1)D_1.
\]

Now assume $k\ge 2$. Equation \eqref{XkA} gives
\begin{equation}\label{moreAk}
    N_k=D_1+2D_0+\sum_{j=2}^k(D_j+D_{j-1}).
\end{equation}

We have by \eqref{doubling2}
\[
D_1\le\frac{(d-1)^kD_k}{d-1}, \qquad D_0\le \frac{(d-1)^kD_k}{1}.
\]
Therefore
\begin{equation}\label{this}
D_1+2D_0\le \frac{(d-1)^kD_k}{d-1}+2(d-1)^kD_k=(d-1)^kD_k\left(\frac{1}{d-1}+2\right)=\frac{2d-1}{d-1}(d-1)^kD_k.
\end{equation}
Inequality \eqref{doubling2} also gives
\begin{equation}\label{thistoo}
\begin{aligned}
\sum_{j=2}^k(D_j+D_{j-1})&\le (d-1)^kD_k\sum_{j=2}^k\left(\frac{1}{(d-1)^j}+\frac{1}{(d-1)^{j-1}}\right)=(d-1)^kD_k\sum_{j=2}^k\frac{d}{(d-1)^j}\\
&<\frac{d}{(d-1)(d-2)}(d-1)^kD_k.
\end{aligned}
\end{equation}

Adding \eqref{this} and \eqref{thistoo}, \eqref{moreAk} gives for $k\ge 2$:
\begin{equation}\label{almostthere}
N_k\le \frac{2d-1}{d-1}(d-1)^kD_k+\frac{d}{(d-1)(d-2)}(d-1)^kD_k=C_d(d-1)^kD_k.
\end{equation}

Recall that we want to prove that $(d-1)^{2k}D_k\ge \frac{d-2}{2}N_k$. Given $k\ge 2$, \eqref{almostthere} leads to
\[
\frac{d-2}{2}N_k\le \frac{d-2}{2}\left(2+\frac{2}{d-2}\right)(d-1)^{k} D_k\le (d-1)^{2k}D_k,
\]
since $d\ge 3$ and $k\ge 1$.

\end{proof}

\subsection{Weiss for the infinite 2-regular tree}

We work on the infinite 2-regular tree rooted at $0$. We have the following

\begin{lemma}\label{L:increments} If $u$ is harmonic, for all $j\ge 0$, $u(j)-u(j+1)=u(0)-u(1)$.
\end{lemma}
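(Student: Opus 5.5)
The infinite $2$-regular tree is the bi-infinite path. Rooted at $0$, I will label its vertices by the integers, so that $V_k=\{k,-k\}$ for $k\ge 1$ and each vertex $j$ has exactly the two neighbours $j-1$ and $j+1$. I read the statement as concerning the vertices $j\ge 0$ on one side of the root; the side $j\le 0$ is handled by the same argument with the labels reversed. With this labelling, harmonicity at a vertex $j$ says
\[
2u(j)=u(j-1)+u(j+1),
\]
which rearranges to
\[
u(j)-u(j+1)=u(j-1)-u(j).
\]
In words, the increment across the edge $(j,j+1)$ equals the increment across the previous edge $(j-1,j)$.

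The plan is to prove the lemma by induction on $j\ge 0$. The base case $j=0$ is the trivial identity $u(0)-u(1)=u(0)-u(1)$. For the inductive step, assume $u(j)-u(j+1)=u(0)-u(1)$ for some $j\ge 0$. Applying the rearranged harmonicity relation at the vertex $j+1$ gives
\[
u(j+1)-u(j+2)=u(j)-u(j+1)=u(0)-u(1),
\]
which is the claim for $j+1$.

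The only step that needs care is the bookkeeping of labels. The vertex $j+1$ at which harmonicity is applied must be interior, with both neighbours $j$ and $j+2$ present; this holds because the path is infinite in both directions. The root is also covered, since harmonicity at $0$ involves $-1$ and $1$, and only harmonicity at vertices $j\ge 1$ is used here. I do not expect any real obstacle. A consequence worth recording for the later Weiss argument is that $u(j)=u(0)-j\,(u(0)-u(1))$, so $u$ is affine along each ray.
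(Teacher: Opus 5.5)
Your proposal is correct and is essentially the paper's proof: induction on $j$ with the trivial base case $j=0$. The inductive step applies harmonicity at vertex $j+1$, $2u(j+1)=u(j)+u(j+2)$, to get $u(j+1)-u(j+2)=u(j)-u(j+1)=u(0)-u(1)$, exactly as the paper does.
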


\begin{proof}
The equality is trivial for $j=0$. Assume $u(j)-u(j+1)=u(0)-u(1)$. Since $u$ is harmonic, $2u(j+1)=u(j)+u(j+2)$, hence $u(j+1)-u(j+2)=u(j)-u(j+1)$. Using the induction hypothesis, we conclude that $u(j+1)-u(j+2)=u(0)-u(1)$, as desired.
\end{proof}

\begin{lemma}\label{L:harmonic2d} If $u$ is harmonic, for all $j\ge 0$, $u(j)=ju(1)-(j-1)u(0)$. In particular,
\[
u(j)=j(u(1)-u(0))+u(0).
\]
\end{lemma}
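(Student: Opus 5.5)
The plan is to derive Lemma \ref{L:harmonic2d} directly from Lemma \ref{L:increments} by telescoping. The infinite $2-$regular tree here is the path indexed by $j\ge 0$, with root $0$. On this index set, harmonicity at each $j\ge 1$ is exactly the relation used in the proof of Lemma \ref{L:increments}.

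The first step is to write, for $j\ge 0$,
\[
u(j)-u(0)=\sum_{i=0}^{j-1}\bigl(u(i+1)-u(i)\bigr).
\]
The sum is empty when $j=0$. By Lemma \ref{L:increments}, each summand equals $u(1)-u(0)$, so the sum is $j\,(u(1)-u(0))$. Rearranging gives
\[
u(j)=j(u(1)-u(0))+u(0)=ju(1)-(j-1)u(0),
\]
which is both stated forms at once.

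Alternatively, one can run a direct induction on $j$. The cases $j=0$ and $j=1$ are trivial. For the inductive step, $u(j+1)=2u(j)-u(j-1)$, and substituting the formula for $j$ and $j-1$ gives $(j+1)u(1)-ju(0)$.

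There is no real obstacle here. The only care needed is the $j=0$ case: the formula gives $0\cdot u(1)+u(0)=u(0)$, which holds trivially.
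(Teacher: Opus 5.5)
Your telescoping argument is correct and is essentially the paper's proof. The paper runs the same computation as an induction on $j$, using Lemma \ref{L:increments} to get $u(j+1)=u(j)+(u(1)-u(0))$ at each step. One small correction to your setup: the infinite $2-$regular tree is the bi-infinite path indexed by $\Z$ (the proof of Theorem \ref{T:Weiss2d} uses $u(-j)$), not the half-line $j\ge 0$. This changes nothing, since your argument only uses values at $j\ge 0$ and harmonicity at $j\ge 1$.
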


\begin{proof}
The equality is trivial for $j=0$. Assume that $u(j)=ju(1)-(j-1)u(0)$ for some $j\ge 0$. By harmonicity,
$2u(j)=u(j-1)+u(j+1)$, hence $u(j)-u(j+1)=u(j-1)-u(j)=u(0)-u(1)$, by Lemma \ref{L:increments}. Combining this with the induction hypothesis, we obtain
\[
u(j+1)=u(j)-u(0)+u(1)=(j+1)u(1)-ju(0)=(j+1)(u(1)-u(0))+u(0).
\]
\end{proof}
Notice that Lemma \ref{L:harmonic2d} shows that harmonic functions on the infinite $2-$regular tree are completely determined by the values $u(0)$ and $u(1)$. 

\begin{theorem}\label{T:Weiss2d}
   Let $u$ be harmonic in the infinite $2$-regular tree. Then
\[
W(k)=\frac{1}{k}\sum_{\substack{e\in E\\ d(0,e)< k}}|\nabla u(e)|^2-\frac{1}{2k^2}\sum_{v\in V_k}u^2(v).
\]
is monotonic in $k$ (with $k\ge 1$) and $W(k)\rightarrow (u(0)-u(1))^2$ as $k\rightarrow\infty$.
\end{theorem}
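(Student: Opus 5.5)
The plan is to compute $W(k)$ in closed form: harmonic functions on the infinite $2$-regular tree are affine, so both terms of $W(k)$ are explicit polynomials in $k$. Identify the tree with $\Z$, rooted at $0$, so that $V_k=\{k,-k\}$ for $k\ge1$. The edges $e$ with $d(0,e)<k$ are exactly the $2k$ edges $(j,j+1)$ with $-k\le j\le k-1$.

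\emph{Step 1 (affine structure).} Set $a=u(0)$ and $b=u(1)-u(0)$. Lemma \ref{L:harmonic2d} gives $u(j)=a+jb$ for $j\ge0$. The lemmas are phrased only for $j\ge 0$, so I will extend this to negative $j$. Harmonicity at $0$ gives $u(-1)=2u(0)-u(1)=a-b$. The same induction as in Lemmas \ref{L:increments} and \ref{L:harmonic2d}, run along the ray $0,-1,-2,\dots$, then gives $u(-j)=a-jb$ for $j\ge0$. Hence $u(j)=a+jb$ for all $j\in\Z$.

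\emph{Step 2 (the two terms).} Every edge has $|\nabla u(e)|=|b|$, so
\[
\frac1k\sum_{\substack{e\in E\\ d(0,e)<k}}|\nabla u(e)|^2=\frac{2k\,b^2}{k}=2b^2 .
\]
For the boundary term,
\[
\sum_{v\in V_k}u(v)^2=(a+kb)^2+(a-kb)^2=2a^2+2k^2b^2 ,
\]
and the cross terms cancel. Dividing by $2k^2$ gives $\frac{a^2}{k^2}+b^2$.

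\emph{Step 3 (conclusion).} Combining the two terms,
\[
W(k)=b^2-\frac{a^2}{k^2}=(u(1)-u(0))^2-\frac{u(0)^2}{k^2}.
\]
This is monotone non-decreasing in $k\ge1$, and it tends to $(u(0)-u(1))^2$ as $k\to\infty$.

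The only point needing care is Step 1: making sure the formula for $u$ holds on both sides of the root. This requires harmonicity at the root itself, because the $\pm$ cross terms cancel only when the slope is the same on both rays. Everything else is direct substitution.
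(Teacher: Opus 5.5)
Your proposal is correct and follows essentially the same route as the paper: write $u$ as an affine function via Lemma \ref{L:harmonic2d}, compute the gradient sum as $2k(u(1)-u(0))^2$ and the boundary sum as $2u(0)^2+2k^2(u(1)-u(0))^2$, and conclude $W(k)=(u(1)-u(0))^2-u(0)^2/k^2$ (the paper just swaps the names $a$ and $b$). Lemma \ref{L:harmonic2d} is only stated for $j\ge 0$, so your justification of the affine formula on the negative ray, using harmonicity at the root, fills in a step the paper uses without comment.
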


\begin{proof}
By Lemma \ref{L:harmonic2d}, $u(n)=an+b$, where $a=u(1)-u(0)$ and $b=u(0).$
We have
\[
|u(j+1)-u(j)|=|a(j+1)+b-(aj+b)|=|a|,
\]
therefore
\begin{align*}
D(k)&:=\sum_{\substack{e\in E\\ d(0,e)< k}}|\nabla u(e)|^2=\sum_{j=0}^{k-1}|u(j)-u(j+1)|^2+\sum_{j=0}^{k-1}|u(-j)-u(-j-1)|^2\\
&=2\sum_{j=0}^{k-1}a^2=2a^2k.
\end{align*}
Moreover, $\sum_{v\in V_k}u^2(v)=u^2(k)+u^2(-k)=2a^2k^2+2b^2,$ hence
\[
W(k)=\frac{1}{k}2a^2k-\frac{1}{2k^2}(2a^2k^2+2b^2)=a^2-\frac{b^2}{k^2}\rightarrow a^2 \text{ as } k\rightarrow\infty.
\]
\end{proof}

\section{Monotonicity of an Almgren frequency function with $p\ge 1$}
In this section, we revisit the Almgren monotonicity formula proved in \cite{SVGS}. In the context of $d-$regular trees, the formula from \cite{SVGS} can be rewritten as the following:
\begin{thm}[Theorem in 1.2 from \cite{SVGS}] Let $u$ be harmonic in the infinite $d-$regular tree. Then for $k\ge 0$,
\[
N(k)=\sum_{v\in V_{k+1}}u(v)^2-(d-1)\sum_{v\in V_k}u(v)^2
\]
is monotone non-decreasing.    
\end{thm}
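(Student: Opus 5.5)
This monotonicity can be read off from the identities already established in the proof of Theorem \ref{T:Weiss}. With $H_k=\sum_{v\in V_k}u(v)^2$ as there, the quantity in the statement is exactly $N(k)=N_k=H_{k+1}-(d-1)H_k$ from \eqref{X}. So the plan is to show that $N_{k}-N_{k-1}$ is a sum of squares of gradients.

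The key step is identity \eqref{XkA}, $N_k=D_k+D_{k-1}+N_{k-1}$ for $k\ge 1$, where $D_j\ge 0$ is the sum of squared differences across edges between $V_j$ and $V_{j+1}$. It gives $N_k-N_{k-1}=D_k+D_{k-1}\ge 0$ at once, so $N$ is non-decreasing.

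I would first recheck the ingredients of \eqref{XkA}, since the computation was written for $d\ge 3$ but uses only harmonicity and counting.
\begin{itemize}
\item The expansion \eqref{newDk}, $D_k=H_{k+1}-(d+1)H_k+2C_k$, holds for $k\ge 1$, with $C_k=\sum_{v\in V_k}u(v)u(v_p)$.
\item The recursion \eqref{Ck}, $C_k=dH_{k-1}-C_{k-1}$, holds for $k\ge 2$.
\item Adding these gives $D_k+D_{k-1}=H_{k+1}-dH_k+(d-1)H_{k-1}$, which is precisely $N_k-N_{k-1}$.
\end{itemize}

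The main obstacle is the boundary case $k=1$, because the root has $d$ children rather than $d-1$. Here I would use \eqref{X1}, $N_1=D_1+2D_0$, together with $N_0=H_1-(d-1)H_0$. Since $D_0=H_1-dH_0$, we get $N_0=D_0+H_0$, so $N_1-N_0=D_1+D_0-H_0$, which is not obviously nonnegative.

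To handle this I would recompute directly. By harmonicity at the root, $\sum_{j}u(y_j)=d\,u(x_0)$, so $D_0=H_1-dH_0$ exactly; nothing else from the root enters. The quantity $N_1-N_0$ must then be controlled by expressing $H_2$ via harmonicity at each $y_j$. If the sign fails, the statement should be read as for $k\ge 1$, where \eqref{XkA} applies verbatim; this matches how \eqref{X1} anchors the induction in the proof of Theorem \ref{T:Weiss}.

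For $d=2$ a direct check is also available. Lemma \ref{L:harmonic2d} gives $u(n)=an+b$ on both rays, so $H_k=2a^2k^2+2b^2$, and $N(k)=H_{k+1}-H_k=2a^2(2k+1)$, which is increasing.
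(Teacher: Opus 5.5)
For $k\ge 1$ your argument is correct. The identities \eqref{newDk} and \eqref{Ck} hold for $k\ge 2$, as your bullets say; the ``for $k\ge 1$'' in your second paragraph is a slip. Together they give $N(k+1)-N(k)=D_{k+1}+D_k\ge 0$ for every $k\ge 1$. This is a different route from the paper's. The paper does not reprove the cited theorem directly; it recovers it as the case $p=2$ of Theorem \ref{T:Almgren}, whose $N$ is $\frac{1}{d-1}$ times this one. That proof applies H\"older over the $d-1$ children of each $w\in V_{k+1}$ and then Jensen's inequality for $|x|^p$. Your route is an exact identity, valid only for $p=2$, that shows the increment is a sum of squared gradients. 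The paper's is an inequality chain that works for all $p\ge 1$.

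The gap is the step $N(0)\le N(1)$, which you leave conditional. It cannot be filled, because it is false. By your own formula $N(1)-N(0)=D_1+D_0-H_0$, any nonzero constant $u\equiv c$ gives $D_0=D_1=0$ and hence $N(1)-N(0)=-c^2<0$. Directly, $N(0)=dc^2-(d-1)c^2=c^2$, while $N(k)=0$ for all $k\ge 1$. So the statement with $k\ge 0$ fails for every $d\ge 2$. What is true, and what you actually proved, is monotonicity for $k\ge 1$; you should assert this rather than hedge with ``if the sign fails''.

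Your $d=2$ check hides the problem. The formula $H_k=2a^2k^2+2b^2$ is valid only for $k\ge 1$: since $V_0=\{0\}$ is a single vertex, $H_0=b^2$. Then $N(0)=2a^2+b^2$ and $N(1)-N(0)=4a^2-b^2$, which is negative whenever $|b|>2|a|$.

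The paper's proof of Theorem \ref{T:Almgren} has the same defect at $k=0$. In \eqref{nice!} it uses that each vertex of $V_k$ is the parent of exactly $d-1$ vertices of $V_{k+1}$, but the root has $d$ children. For $k=0$ the argument therefore yields only $H_2\ge dH_1-dH_0$. That is weaker than the needed $H_2\ge dH_1-(d-1)H_0$, and it holds with equality for constants.
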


In this paper, we generalize this result to any $p\ge $1 below:

\begin{theorem}[Discrete Almgren]\label{T:Almgren} Fix $p\ge 1$ and $d\ge 2$. Let $x$ be a vertex in the infinite $d-$regular tree $T_d=(V,E)$, and let $u:V \rightarrow \mathbb{R}$ be a harmonic function. Then, for $k \geq 0$, the function
\[
N(k) = \frac{1}{d-1} \sum_{d(x,y)=k+1} |u(y)|^p - \sum_{d(x,y)=k}|u(y)|^p 
\]
is monotonically increasing in $k$, i.e. $N(k+1) \geq N(k).$
\end{theorem}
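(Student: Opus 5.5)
The plan is to reduce monotonicity of $N$ to a second-difference inequality for the $p$-th power sphere sums. Set $S_k=\sum_{d(x,y)=k}|u(y)|^p$. Then $N(k)=\frac{1}{d-1}S_{k+1}-S_k$, and
\[
(d-1)\bigl(N(k+1)-N(k)\bigr)=S_{k+2}-dS_{k+1}+(d-1)S_k,
\]
so it suffices to prove $S_{k+2}-dS_{k+1}+(d-1)S_k\ge 0$ for all $k\ge 0$. I would get this by summing a local convexity inequality over the vertices of $V_{k+1}$, in the same way the proof of Theorem \ref{T:Dirichlet} summed \eqref{ACF3} over a sphere.

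\textbf{Local inequality.} Fix $v\in V_{k+1}$ with parent $v_p$ and children $c_1,\dots,c_{d-1}$. Harmonicity says that $u(v)$ is the average of the $d$ numbers $u(v_p),u(c_1),\dots,u(c_{d-1})$. Since $t\mapsto|t|^p$ is convex for $p\ge1$, Jensen's inequality gives
\[
d\,|u(v)|^p\le |u(v_p)|^p+\sum_{i=1}^{d-1}|u(c_i)|^p .
\]

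\textbf{Summing for $k\ge1$.} Sum the local inequality over $v\in V_{k+1}$. The children terms give exactly $S_{k+2}$, and the left side gives $dS_{k+1}$. For the parent terms, each $w\in V_k$ with $k\ge1$ has exactly $d-1$ children, so they contribute $(d-1)S_k$. This yields $S_{k+2}+(d-1)S_k\ge dS_{k+1}$, which is the claim for every $k\ge1$.

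\textbf{The case $k=0$.} This is where I expect the main obstacle. The root has $d$ children rather than $d-1$, so the same summation only gives $S_2-dS_1+dS_0\ge0$, which is weaker than the required bound by $S_0=|u(x)|^p$. Simple sharpenings do not close the gap either:
\begin{itemize}
\item the tangent-line inequality $|a|^p\ge|b|^p+p|b|^{p-2}b(a-b)$ applied at $u(v)$, combined with $\sum_i u(c_i)-(d-1)u(v)=u(v)-u(x)$, leads back to the same bound;
\item the root inequality $S_1\ge dS_0$, which is Jensen at $x$, has the wrong sign to help.
\end{itemize}
What has to be exploited is the global constraint $\sum_{v\in V_1}\bigl(u(v)-u(x)\bigr)=0$ coupled with the children of all the $v\in V_1$ simultaneously. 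The test case $d=2$, $u(n)=n+b$, confirms that the inequality holds with room to spare, giving $4\ge0$.

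My first attempt would be to apply convexity around the point $u(x)$ rather than $u(v)$: expand $\sum_{v\in V_1}\sum_i|u(c_i^v)|^p$ by the tangent line at $u(x)$, and use the identities $\sum_i u(c_i^v)=du(v)-u(x)$ and $\sum_v u(v)=du(x)$ to kill the linear terms. I would then check that the remaining deficit is controlled, treating $p=2$ first against the identity $N_1=D_1+2D_0$ from the proof of Theorem \ref{T:Weiss}.

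If that fails, I would use the following reduction. Monotonicity for $k\ge1$ already holds, so it is enough to show $N(1)\ge N(0)$. I would try to obtain this by re-rooting the tree at a neighbor of $x$, or by a separate symmetrization argument over the $d$ branches at $x$.
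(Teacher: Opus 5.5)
For $k\ge 1$ your argument is correct and is the paper's. The paper reaches your local inequality $d|u(v)|^p\le |u(v_p)|^p+\sum_i|u(c_i)|^p$ in two steps: the power-mean bound \eqref{holder2} over the $d-1$ children, then the two-point Jensen inequality \eqref{Jensen} with weights $\frac1d,\frac{d-1}{d}$. It then sums over $V_{k+1}$ just as you do.

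The gap is at $k=0$, and none of your proposed repairs (tangent lines at $u(x)$, re-rooting, symmetrizing over branches) can close it, because $N(1)\ge N(0)$ is false in general. For $u\equiv c\neq 0$ one has $S_0=|c|^p$ and $S_k=d(d-1)^{k-1}|c|^p$ for $k\ge 1$, so $N(0)=\frac{|c|^p}{d-1}>0=N(1)$.

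Your sanity check was miscomputed. For $d=2$, $p=2$, $u(n)=n+b$ one gets $S_0=b^2$, $S_1=2b^2+2$, $S_2=2b^2+8$, hence $S_2-2S_1+S_0=4-b^2$. This is negative for $|b|>2$; the value $4$ is just the case $b=0$. The $p=2$ comparison you planned would have exposed the same thing. In the notation of the Weiss proof, $N_0=H_1-(d-1)H_0=D_0+H_0$ and $N_1=D_1+2D_0$, so $N_1-N_0=D_1+D_0-H_0$. This is negative as soon as the value at the root dominates the first two layers of differences.

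The paper's proof has the same hole, only unflagged. In \eqref{nice!} it replaces $\sum_{w\in V_{k+1}}|u(w_p)|^p$ by $(d-1)H_k$, but for $k=0$ this sum equals $dH_0$, since the root has $d$ children. So your diagnosis was right, and the correct outcome is a corrected statement rather than a cleverer proof.

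What is true is that $N$ is non-decreasing for $k\ge 1$. At $k=0$, your summation $S_2-dS_1+dS_0\ge 0$ gives the sharp bound $N(1)\ge N(0)-\frac{|u(x)|^p}{d-1}$, with equality for constants. In particular $N(1)\ge N(0)$ does hold whenever $u(x)=0$.

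Alternatively, redefine $N(0)$ as $\frac1d S_1-S_0$. Then monotonicity holds from $k=0$ on: combining $S_2\ge dS_1-dS_0$ with Jensen at the root, $S_1\ge dS_0$, gives $N(1)-\bigl(\frac1d S_1-S_0\bigr)\ge \frac{S_1-dS_0}{d(d-1)}\ge 0$. So the root inequality you set aside is exactly what is needed once the root term is normalized by the root's actual degree.
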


\begin{proof}
Fix $p\ge 1$ and $d\ge 2$.
    Define 
    \[
    H_k=\sum_{x\in V_k}|u(x)|^p.
    \]
    The monotonicity of $N$ is equivalent to $N(k+1)\ge N(k)$ for $k\ge 0$, which in turn is equivalent to
    \[
    \frac{1}{d-1}H_{k+2}-H_{k+1}\ge \frac{1}{d-1}H_{k+1}-H_k.
    \]
    Rearranging the terms, this is equivalent to
    \begin{equation}\label{want}
    \frac{1}{d-1}H_{k+2}-\frac{d}{d-1}H_{k+1}+H_k\ge0.
    \end{equation}

    Fix $w\in V_{k+1}$. Denote the $d-1$ children of $w$ by $w_1,\ldots,w_{d-1}\in V_{k+2}.$
    By H\"{o}lder's inequality,
    \[
  \sum_{j=1}^{d-1}|u(w_j)|= \sum_{j=1}^{d-1}1\cdot |u(w_j)|\le(d-1)^{1-\frac{1}{p}}\left(\sum_{j=1}^{d-1}|u(w_j)|^p\right)^{\frac{1}{p}},
    \]
    therefore
    \begin{equation}\label{Holder}
    \left(\sum_{j=1}^{d-1}|u(w_j)|\right)^p\le(d-1)^{p-1}\sum_{j=1}^{d-1}|u(w_j)|^p.
    \end{equation}
    We can rewrite \eqref{Holder} as
    \begin{equation}\label{holder2}
    \frac{1}{d-1}\sum_{j=1}^{d-1}|u(w_j)|^p\ge\left(\frac{1}{d-1}\sum_{j=1}^{d-1}|u(w_j)|\right)^p.
    \end{equation}
    Since $u$ is harmonic, if we denote $w$'s parent by $w_p$, we have
\[
u(w)=\frac{u(w_p)+\sum_{j=1}^{d-1}u(w_j)}{d},
\]
hence
\begin{equation}\label{harwj}
\sum_{j=1}^{d-1}u(w_j)=du(w)-u(w_p).
\end{equation}
Combining \eqref{holder2} and \eqref{harwj} we obtain
\begin{equation}\label{good}
\sum_{j=1}^{d-1}|u(w_j)|^p\ge(d-1)\left|\frac{du(w)-u(w_p)}{d-1}\right|^p.
\end{equation}
Summing \eqref{good} over $w\in V_{k+1}$, we get
\begin{equation}\label{ha!}
H_{k+2}\ge(d-1)\sum_{w\in V_{k+1}}\left|\frac{du(w)-u(w_p)}{d-1}\right|^p.
\end{equation}
To address the right-hand-side, we will use the following inequality, which holds for any $A,B\in \R$, $d\ge 2$ and $p\ge 1$:
\begin{equation}\label{Jensen}
\left|\frac{dA-B}{d-1}\right|\ge\frac{d}{d-1}|A|^p-\frac{1}{d-1}|B|^p.
\end{equation}
To prove it, write
\[
A=\frac{B}{d}+\frac{d-1}{d}\frac{(dA-B)}{d-1}.
\]
Since $f(x)=|x|^p$ is convex for $p\ge 1$, Jensen's inequality gives
\[
|A|^p\le \frac{|B|^p}{d}+\frac{d-1}{d}\left|\frac{dA-B}{d-1}\right|^p,
\]
from which \eqref{Jensen} follows. We now apply \eqref{Jensen} with $A=u(w)$ and $B=u(w_p)$:
\[
\left|\frac{du(w)-u(w_p)}{d-1}\right|^{p}\ge\frac{d}{d-1}|u(w)|^p-\frac{1}{d-1}|u(w_p)|^p.
\]
Summing over $w\in V_{k+1}$, we get
\begin{equation}\label{nice!}
\sum_{w\in V_{k+1}}\left|\frac{du(w)-u(w_p)}{d-1}\right|^{p}\ge\frac{d}{d-1}H_{k+1}-\frac{1}{d-1}(d-1)H_k=\frac{d}{d-1}H_{k+1}-H_k.
\end{equation}

Combining \eqref{ha!} with \eqref{nice!} we conclude
\[
H_{k+2}\ge dH_{k+1}-(d-1)H_k,
\]
from which we conclude
\[
\frac{1}{d-1}H_{k+2}-\frac{d}{d-1}H_{k+1}+H_k\ge 0,
\]
which is precisely what we wanted to prove.
\end{proof}

\section{Examples}\label{S:ex}
The purpose of this section is to construct explicit examples of harmonic functions to illustrate Theorems \ref{T:Dirichlet}, \ref{T:Weiss}, \ref{T:Weiss2d} and \ref{T:Almgren}.

\subsection{For the infinite $3$-regular tree}
\subsubsection{Bounded Harmonic Function}\label{E:bounded}
\begin{center}
\begin{figure}[htbp]
\begin{tikzpicture}[
    scale=1.4,
    vdot/.style={circle, fill=black, inner sep=1.2pt},
    vlabel/.style={font=\tiny, inner sep=3pt},
    edge/.style={draw=black!60, thin}
]

    \node[vdot] (x0) at (0,0) {};
    \node[vlabel, anchor=south east] at (x0) {$0$};

    \node[vdot] (n1) at (90:0.8) {}; \node[vlabel, anchor=south] at (n1) {$1$};
    \node[vdot] (n2) at (210:0.8) {}; \node[vlabel, anchor=north east] at (n2) {$-1$};
    \node[vdot] (n3) at (330:0.8) {}; \node[vlabel, anchor=west] at (n3) {$0$};
    
    \draw[edge] (x0) -- (n1);
    \draw[edge] (x0) -- (n2);
    \draw[edge] (x0) -- (n3);

    \node[vdot] (n11) at (75:1.5) {}; \node[vlabel, anchor=south west] at (n11) {$\frac{3}{2}$};
    \node[vdot] (n12) at (105:1.5) {}; \node[vlabel, anchor=south east] at (n12) {$\frac{3}{2}$};
    \draw[edge] (n1) -- (n11); \draw[edge] (n1) -- (n12);

    \node[vdot] (n21) at (195:1.5) {}; \node[vlabel, anchor=east] at (n21) {$-\frac{3}{2}$};
    \node[vdot] (n22) at (225:1.5) {}; \node[vlabel, anchor=north east] at (n22) {$-\frac{3}{2}$};
    \draw[edge] (n2) -- (n21); \draw[edge] (n2) -- (n22);

    \node[vdot] (n31) at (315:1.5) {}; \node[vlabel, anchor=north west] at (n31) {$0$};
    \node[vdot] (n32) at (345:1.5) {}; \node[vlabel, anchor=west] at (n32) {$0$};
    \draw[edge] (n3) -- (n31); \draw[edge] (n3) -- (n32);

    \node[vdot] (n111) at (68:2.1) {}; \node[vlabel, anchor=south west] at (n111) {$\frac{7}{4}$};
    \node[vdot] (n112) at (82:2.1) {}; \node[vlabel, anchor=south] at (n112) {$\frac{7}{4}$};
    \node[vdot] (n121) at (98:2.1) {}; \node[vlabel, anchor=south] at (n121) {$\frac{7}{4}$};
    \node[vdot] (n122) at (112:2.1) {}; \node[vlabel, anchor=south east] at (n122) {$\frac{7}{4}$};
    \draw[edge] (n11) -- (n111); \draw[edge] (n11) -- (n112);
    \draw[edge] (n12) -- (n121); \draw[edge] (n12) -- (n122);

    \node[vdot] (n211) at (188:2.1) {}; \node[vlabel, anchor=east] at (n211) {$-\frac{7}{4}$};
    \node[vdot] (n212) at (202:2.1) {}; \node[vlabel, anchor=east] at (n212) {$-\frac{7}{4}$};
    \node[vdot] (n221) at (218:2.1) {}; \node[vlabel, anchor=north east] at (n221) {$-\frac{7}{4}$};
    \node[vdot] (n222) at (232:2.1) {}; \node[vlabel, anchor=north] at (n222) {$-\frac{7}{4}$};
    \draw[edge] (n21) -- (n211); \draw[edge] (n21) -- (n212);
    \draw[edge] (n22) -- (n221); \draw[edge] (n22) -- (n222);

    \node[vdot] (n311) at (308:2.1) {}; \node[vlabel, anchor=north] at (n311) {$0$};
    \node[vdot] (n312) at (322:2.1) {}; \node[vlabel, anchor=north west] at (n312) {$0$};
    \node[vdot] (n321) at (338:2.1) {}; \node[vlabel, anchor=west] at (n321) {$0$};
    \node[vdot] (n322) at (352:2.1) {}; \node[vlabel, anchor=west] at (n322) {$0$};
    \draw[edge] (n31) -- (n311); \draw[edge] (n31) -- (n312);
    \draw[edge] (n32) -- (n321); \draw[edge] (n32) -- (n322);
\end{tikzpicture}
\caption{Sketch of the bounded harmonic function described in \S \ref{E:bounded} (locally).}
 \label{fig:4}
\end{figure}
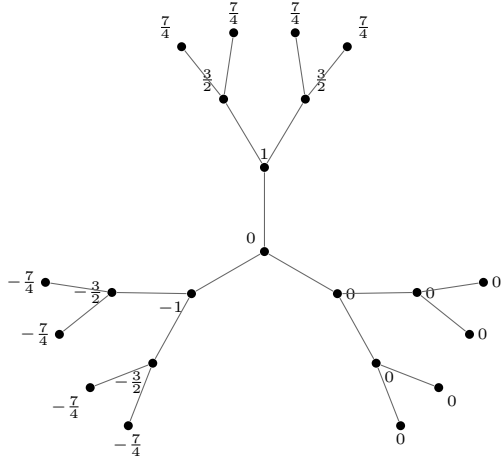
\end{center}

The first example comes from \cite{SVGS}, and is sketched in Fig. \ref{fig:4}. We fix a root $x_0$ for the infinite $3-$regular tree and set $u(x_0)=0$. The function $u$ is also zero in an entire branch leading away from it. Define
\[ a_0 = 0, \  a_1 = 1 \qquad \mbox{and} \qquad a_{k+1} = \frac{3a_k - a_{k-1}}{2}.\]
We can prove by induction that for $n \geq 1$
$$ a_n = 2 - \frac{1}{2^{n-1}}.$$
On the remaining branch, the values are simply $-a_n$. This leads to a non-constant harmonic function with values between $-2$ and $2$.

Our discrete weighted Dirichlet energy theorem (Theorem \ref{T:Dirichlet}) states that
    \[ 
    G(k) = \frac{1}{k} \sum_{\substack{e \in E\\d(x_0,e) < k}} (2^{p-1})^{d(x_0,e)} \cdot |\nabla u(e)|^p
    \]
    is monotonically non-decreasing in $k$.
We note that if $e=(a_n,a_{n+1})$, then $d(e,0)=n$ and
\begin{align*}
(2^{p-1})^{n} \cdot|\nabla u(e)|^p &= (2^{p-1})^{n} \cdot|a_{n+1} - a_n|^p = (2^{p-1})^{n} \cdot\left[\left(2-\frac{1}{2^n}\right) - \left(2 - \frac{1}{2^{n-1}}\right)\right]^p\\
&= (2^{p-1})^{n} \cdot\frac{1}{2^{pn}}=\frac{1}{2^n}.
\end{align*}

Since $|\{e\in E \ : \ d(e,0)=\ell, \  |\nabla u(e)|\neq 0\}|=2^{\ell+1}$,  
\[
G(k)=\frac{1}{k}\sum_{\ell=0}^{k-1}2^{\ell+1}\frac{1}{2^{\ell}}=\frac{1}{k}2k=2.
\]

Our discrete Weiss monotonicity formula, Theorem \ref{T:Weiss}, proves the monotonicity of
\[
W(k) = \sum_{j=0}^{k-1} 2^j D_j - \frac{H_k}{2^k},
\]
where, if $y_1,y_2$ denote $y$'s children,
\[
D_j=\sum_{y\in V_j}\sum_{\ell=1}^{2}|u(y)-u(y_{\ell})|^2, \qquad H_k=\sum_{y\in V_k}u(y)^2.
\]
We have

\[
\frac{H_k}{2^k} = \frac{1}{2^k}2^{k} \cdot a_k^2=a_k^2 = 4 - \frac{8}{2^k} + \frac{4}{4^k}.
\]
Consequently
\[
W(k) = 2k - \left( 4 - \frac{8}{2^k} + \frac{4}{4^k} \right)= 2k - 4 + \frac{8}{2^k} - \frac{4}{4^k}.
\]

Notice that $W(k+1)-W(k)= 2 - \frac{4}{2^k} + \frac{3}{4^k}>0$, which can be seen letting $t=\frac{1}{2^k}$ and rewriting $W(k+1)-W(k)=2-4t+3t^2>0$.

Regarding the $p$-generalization of Almgren's monotonicity formula, Theorem \ref{T:Almgren}, we have
\[
H(k)=2^k\left(2-\frac{1}{2^{k-1}}\right)^p=2^{k+p}\left(1-\frac{1}{2^k}\right)^p.
\]
Hence
\[
N(k)=\frac{1}{2}H_{k+1}-H_k=2^{k+p}\left[\left(1-2^{-(k+1)}\right)^p-\left(1-2^{-k}\right)^p\right].
\]
One can re-check the monotonicity of $N$ by letting $x=2^{-k}\in (0,1]$ and rewriting 
\[
N(k)=\frac{2^p}{x}\left[\left(1-\frac{x}{2}\right)^p-\left(1-x\right)^p\right].
\]
Letting $h(x)=|1-x|^p$, which is convex for $p\ge 1$, we conclude that
\[
x\mapsto\frac{h(x/2)-h(x)}{x}
\]
is non-increasing in $x\in (0,1)$, which implies $N(k)$ is non-decreasing in $k$.

\subsubsection{Unbounded harmonic functions}\label{SS:unbounded}

Now, consider the infinite 3-regular tree, with root $x_0$. We set $u(x_0)=1$. We set one neighbor to have value $2$, and two neighbors to have value $1/2$. 
We proceed according to the following rule: each vertex $w$ always has three neighbors, two of which have value $u(w)/2$, and one which has value $2u(w)$, see Figure \ref{fig:5}.

\begin{center}
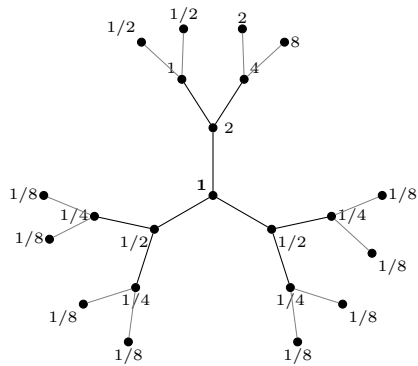
\begin{figure}[htbp]
\begin{tikzpicture}[
scale=0.5,
    dot/.style={circle, draw, fill=black, inner sep=0pt, minimum size=3pt},
    label style/.style={font=\tiny, inner sep=2pt}
]

\node[dot] (R) at (0,0) {};
\node[label style, anchor=south east] at (R) {\textbf{1}};

\node[dot] (N1) at (90:1.8) {};   \node[label style, anchor=west, xshift=2pt] at (N1) {2};
\node[dot] (N2) at (210:1.8) {};  \node[label style, anchor=north east] at (N2) {1/2};
\node[dot] (N3) at (330:1.8) {};  \node[label style, anchor=north west] at (N3) {1/2};
\draw[thin] (R) -- (N1) (R) -- (N2) (R) -- (N3);

\node[dot] (N11) at (75:3.2) {};  \node[label style, anchor=south west] at (N11) {4};
\node[dot] (N12) at (105:3.2) {}; \node[label style, anchor=south east] at (N12) {1};
\draw[thin] (N1) -- (N11) (N1) -- (N12);

\node[dot] (N21) at (190:3.2) {}; \node[label style, anchor=east] at (N21) {1/4};
\node[dot] (N22) at (230:3.2) {}; \node[label style, anchor=north] at (N22) {1/4};
\draw[thin] (N2) -- (N21) (N2) -- (N22);

\node[dot] (N31) at (350:3.2) {}; \node[label style, anchor=west] at (N31) {1/4};
\node[dot] (N32) at (310:3.2) {}; \node[label style, anchor=north] at (N32) {1/4};
\draw[thin] (N3) -- (N31) (N3) -- (N32);

\node[dot] (L3_1) at (65:4.5) {}; \node[label style, anchor=west] at (L3_1) {8};
\node[dot] (L3_2) at (80:4.5) {}; \node[label style, anchor=south] at (L3_2) {2};
\draw[very thin, gray] (N11) -- (L3_1) (N11) -- (L3_2);

\node[dot] (L3_3) at (100:4.5) {}; \node[label style, anchor=south] at (L3_3) {1/2};
\node[dot] (L3_4) at (115:4.5) {}; \node[label style, anchor=south east] at (L3_4) {1/2};
\draw[very thin, gray] (N12) -- (L3_3) (N12) -- (L3_4);

\node[dot] (L3_5) at (180:4.5) {}; \node[label style, anchor=east] at (L3_5) {1/8};
\node[dot] (L3_6) at (195:4.5) {}; \node[label style, anchor=east] at (L3_6) {1/8};
\draw[very thin, gray] (N21) -- (L3_5) (N21) -- (L3_6);

\node[dot] (L3_7) at (220:4.5) {}; \node[label style, anchor=north east] at (L3_7) {1/8};
\node[dot] (L3_8) at (240:4.5) {}; \node[label style, anchor=north] at (L3_8) {1/8};
\draw[very thin, gray] (N22) -- (L3_7) (N22) -- (L3_8);

\node[dot] (L3_9) at (360:4.5) {};  \node[label style, anchor=west] at (L3_9) {1/8};
\node[dot] (L3_10) at (340:4.5) {}; \node[label style, anchor=north west] at (L3_10) {1/8};
\draw[very thin, gray] (N31) -- (L3_9) (N31) -- (L3_10);

\node[dot] (L3_11) at (320:4.5) {}; \node[label style, anchor=north west] at (L3_11) {1/8};
\node[dot] (L3_12) at (300:4.5) {}; \node[label style, anchor=north] at (L3_12) {1/8};
\draw[very thin, gray] (N32) -- (L3_11) (N32) -- (L3_12);

\end{tikzpicture}
\caption{Sketch of the unbounded harmonic function described in \S \ref{SS:unbounded}: each vertex $x$ has 2 neighbors with value $u(x)/2$, and one with value $2u(x)$.}
\label{fig:5}
\end{figure}
\end{center}

To compute the weighted Dirichlet energy, we categorize the edges into two types. 
\begin{itemize}
    \item Type U (upward): from parent $u$ to child $2u$. For these edges, $|\nabla u(e)|^2=(2u - u)^2 = u^2$.
\item Type D (downward): from parent $u$ to child $u/2$. For these edges, $|\nabla u(e)|^2=(u - u/2)^2 = (u/2)^2$.
\end{itemize}

Based on the fact that two neighbors of $x$ will have value $\frac{u(x)}{2}$ and one neighbor will have value $2u(x)$:
\begin{itemize}
    \item a type U edge at level $\ell$ generates: 
    \begin{itemize} 
    \item one type U child. If we call this edge $e_1$, $|\nabla u(e_1)|^2=4u^2$.
   \item  one type D child. If we call this edge $e_2$, $|\nabla u(e_2)|^2=u^2$.
\end{itemize}
\item A type D edge at level $\ell$ generates two type D children. For each of these edges $f_1,f_2$, $|\nabla u(f_j)|^2=(u/4)^2$.
\end{itemize}
Let \[
\mathcal{U}_\ell=2^{\ell}\sum_{\substack{e\in E \text{ type U}\\d(e,x_0)=\ell}}|\nabla u(e)|^2, \qquad \mathcal{D}_{\ell}=2^{\ell}\sum_{\substack{e\in E \text{ type D}\\d(e,x_0)=\ell}}|\nabla u(e)|^2.
\]

  We have $\mathcal{U}_{\ell+1} = 8 \mathcal{U}_\ell$, and since $\mathcal{U}_0 = 1$, we obtain $\mathcal{U}_\ell = 8^\ell$. Moreover,
\[
\mathcal{D}_{\ell+1} = 2 \mathcal{U}_\ell + \mathcal{D}_\ell=2\cdot 8^{\ell} +\mathcal{D}_{\ell}.
\]
Notice that $\mathcal{D}_0 = 2(1/2)^2 = \frac{1}{2}$. We also have 
\[
\mathcal{D}_\ell = \mathcal{D}_0 + \sum_{i=0}^{\ell-1} 2(8^i) = \frac{1}{2} + 2\left(\frac{8^\ell - 1}{7}\right).
\]
We then have
 \[
2^{\ell}\sum_{\substack{(a,b)\in E\\a\in V_{\ell}, b\in V_{\ell+1}}}|u(a)-u(b)|^2  = \mathcal{U}_\ell + \mathcal{D}_\ell = 8^\ell +\frac{1}{2} + 2\left(\frac{8^\ell - 1}{7}\right) = \frac{9 \cdot 8^\ell}{7} + \frac{3}{14}.
 \]

Consequently, 
\[
G(k) = \frac{1}{k} \sum_{\ell=0}^{k-1} \left( \frac{9 \cdot 8^\ell}{7} + \frac{3}{14} \right)=\frac{1}{k} \left[ \frac{9(8^{k} - 1)}{49} + \frac{3k}{14} \right].
\]

Regarding our Weiss monotonicity formula, Theorem \ref{T:Weiss}, we have alredy proven that
\begin{align*}
\sum_{j=0}^{k-1} 2^j D_j=\frac{9(8^k-1)}{49}+\frac{3k}{14}.
\end{align*}
To compute $H_k$, let us categorize the vertices into two types. We say $w\in V_k$ is of   
\begin{itemize}
    \item type A if its parent has value $2u(w)$. 
\item type B if its parent has value $\frac{u(w)}{2}.$
\end{itemize}

Given the structure of this harmonic function, 
\begin{itemize}
\item a type A vertex at level $\ell$ generates two type A children.
    \item a type B vertex at level $\ell$ generates: 
    \begin{itemize} 
    \item one type A child.
   \item  one type B child.
\end{itemize}
\end{itemize}

Let \[
A_\ell=\sum_{v\in V_{\ell} \text{ type A}}u(v)^2, \qquad B_\ell=\sum_{v\in V_{\ell} \text{ type B}}u(v)^2, \qquad H_{\ell}=A_{\ell}+B_{\ell}.
\]
  We have $B_{\ell+1} = 4 B_\ell$, and since $B_1 = 4$, we obtain $B_\ell = 4^\ell$. Moreover, $A_1=\frac{1}{2}$ and
\[
A_{\ell+1} = \frac{A_{\ell}}{2}+ \frac{B_{\ell}}{4}=\frac{A_{\ell}}{2}+4^{\ell-1}.
\]
Therefore
\[
A_{\ell}=\frac{3}{7}\frac{1}{2^{\ell}}+\frac{4^{\ell}}{14}.
\]
Putting these together,
\begin{align*}
W(k)&= \frac{9(8^k - 1)}{49} +\frac{3k}{14} -\frac{1}{2^k}\left(\frac{3}{7}\frac{1}{2^k}+\frac{4^k}{14}+4^k\right).
\end{align*}
One can show that
\[
W(k+1)-W(k)=\frac{9\cdot 8^k}{7}+\frac{3}{14}+\frac{9}{28\cdot 4^k}-\frac{15\cdot 2^k}{14}.
\]
Calling $t=2^k$, we have $8^k=t^3$ and $4^k=t^2$, so for $t\ge 1$
\[
28t^2(W(k+1)-W(k))=36t^5+6t^2+9-30t^3=6t^3(6t^2-5)+6t^2+9> 6t^2+9>0.
\]

To compute $N(k)$, define 
\[
A_\ell^p=\sum_{v\in V_{\ell} \text{ type A}}|u(v)|^p, \qquad B_\ell^p=\sum_{v\in V_{\ell} \text{ type B}}|u(v)|^p, \qquad H_{\ell}^p=A_{\ell}^p+B_{\ell}^p.
\]
When $p=3$, we have $B_{\ell+1} = 8 B_\ell$, and since $B_1 = 8$, we obtain $B_\ell = 8^\ell$. Moreover, $A_1=\frac{1}{4}$ and
\[
A_{\ell+1} = \frac{A_{\ell}}{4}+ \frac{B_{\ell}}{8}=\frac{A_{\ell}}{4}+8^{\ell-1}.
\]
Therefore
\[
A_{\ell}=\frac{15}{31}\frac{1}{4^{\ell}}+\frac{8^{\ell}}{62}.
\]
From this, we obtain $H_{\ell}=\frac{15}{31}\frac{1}{4^{\ell}}+\frac{8^{\ell}}{62}+8^{\ell}$ and
\[
N(\ell)=\frac{1}{2}\left(\frac{15}{31}\frac{1}{4^{\ell+1}}+\frac{8^{\ell+1}}{62}+8^{\ell+1}\right)-\left(\frac{15}{31}\frac{1}{4^{\ell}}+\frac{8^{\ell}}{62}+8^{\ell}\right)=\frac{-105}{248\cdot 4^{\ell}}+\frac{189\cdot 8^{\ell}}{62}.
\]

\subsection{For the $2$-regular tree}

\subsubsection{Bounded harmonic function}

Bounded harmonic functions on the $2$-regular tree are of the form $u(n)=C$. In this case, $G(k)=0$, $W(k)=-\frac{C^2}{k^2}$ and $N(k)=0$.

\subsubsection{Unbounded harmonic function}
Let $u(n)=an+b$. Then $G(k)=\frac{1}{k}2k|a|^p=2|a|^p$, and we have already showed in the proof of Theorem \ref{T:Weiss2d} that $W(k)=a^2-\frac{b^2}{k^2}$. Finally,
\[
N(k)=(|a(k+1)+b|^p+|a(-k-1)+b|^p)-(|ak+b|^p+|a(-k)+b|^p).
\]
Letting $f(k)=|ak+b|^p+|a(-k)+b|^p$, the monotonicity of $N$ is equivalent to $f(k+2)+f(k)\ge 2f(k+1)$. To show this, first notice that for any real numbers $c,d$ we have 
\begin{equation}\label{numberstuff}
|c+d|^p+|c-d|^p\ge 2|c|^p:
\end{equation}
indeed, call $u=c+d$ and $v=c-d$, so $c=\frac{u+v}{2}$. The statement is equivalent to $|u|^p+|v|^p\ge 2\frac{|u-v|^p}{2^p}$, which holds by the power mean inequality, since \[\frac{|u|^p+|v|^p}{2}\ge\left(\frac{|u|+|v|}{2}\right)^p\ge\left(\frac{|u+v|}{2}\right)^p.
\]
Apply \eqref{numberstuff} with $c=a(k+1)+b$, $d=a(-k-1)+b$, so $f(k+1)=|c|^p+|d|^p$, $f(k)=|c-a|^p+|d+a|^p$, $f(k+2)=|c+a|^p+|d-a|^p$. With this notation, $f(k+2)+f(k)\ge 2f(k+1)$ is equivalent to $|c+a|^p+|d-a|^p+|c-a|^p+|d+a|^p\ge 2|c|^p+2|d|^p$, which follows from \eqref{numberstuff}.

\printbibliography
\end{document}